\def\x{{\bf x}}
\def\x{{\mathbf x}}
\def\x{{\bf x}}
\def\h{{\bf h}}
\def\be{\begin{equation}}
\def\ee{\end{equation}}
\def\ba{\left[\begin{array}}
\def\ea{\end{array}\right]}
\def\x{{\bf x}}
\def\1{{\bf 1}}
\def\g{{\bf g}}
\def\0{{\bf 0}}
\newtheorem{theorem}{Theorem}
\newtheorem{lemma}{Lemma}
\begin{document}

\begin{singlespace}

\title {Negative spherical perceptron 
}
\author{
\textsc{Mihailo Stojnic}
\\
\\
{School of Industrial Engineering}\\
{Purdue University, West Lafayette, IN 47907} \\
{e-mail: {\tt mstojnic@purdue.edu}} }
\date{}
\maketitle

\centerline{{\bf Abstract}} \vspace*{0.1in}

In this paper we consider the classical spherical perceptron problem. This problem and its variants have been studied in a great detail in a broad literature ranging from statistical physics and neural networks to computer science and pure geometry. Among the most well known results are those created using the machinery of statistical physics in \cite{Gar88}. They typically relate to various features ranging from the storage capacity to typical overlap of the optimal configurations and the number of incorrectly stored patterns. In \cite{SchTir02,SchTir03,TalBook} many of the predictions of the statistical mechanics were rigorously shown to be correct. In our own work \cite{StojnicGardGen13} we then presented an alternative way that can be used to study the spherical perceptrons as well. Among other things we reaffirmed many of the results obtained in \cite{SchTir02,SchTir03,TalBook} and thereby confirmed many of the predictions established by the statistical mechanics. Those mostly relate to spherical perceptrons with positive thresholds (which we will typically refer to as the positive spherical perceptrons). In this paper we go a step further and attack the negative counterpart, i.e. the perceptron with negative thresholds. We present a mechanism that can be used to analyze many features of such a model. As a concrete example, we specialize our results for a particular feature, namely the storage capacity. The results we obtain for the storage capacity seem to indicate that the negative case could be more combinatorial in nature and as such a somewhat harder challenge than the positive counterpart.

\vspace*{0.25in} \noindent {\bf Index Terms: Negative spherical perceptron; storage capacity}.

\end{singlespace}

\section{Introduction}
\label{sec:back}

We start by briefly revisiting the basics of the perceptron problems. These problems have its roots in a variety of applications, ranging from standard neural networks to statistical physics and even bio-engineering. Since, our main interest will be the underlying mathematics of perceptron models we will start by recalling on a few mathematical definitions/descriptions typically used to introduce a perceptron (for the context that we will need here a bit more detailed presentation on how a perceptron works can be found e.g. in our recent work \cite{StojnicGardGen13}; on the other hand a way more detailed presentation of a perceptron's functioning can be found in many excellent references, see e.g. \cite{Gar88,GarDer88,AmiGutSom85,AmiGutSom87,TalBook}). Before introducing the details we also mention that there are many variants of the perceptrons (some of them have been studied in \cite{Gar88,GarDer88,AmiGutSom85,AmiGutSom87,TalBook}). The one that we will study here is typically called the spherical perceptron (or often alternatively the Gardner problem as the first analytical description of its predicted functioning was presented in \cite{Gar88}).

Mathematically, the description of the spherical perceptron (or for that matter of pretty much any perceptron) typically starts with the following dynamics:
\begin{equation}
H_{ik}^{(t+1)}=\mbox{sign} (\sum_{j=1,j\neq k}^{n}H_{ij}^{(t)}X_{jk}-T_{ik}).\label{eq:defdyn}
\end{equation}
Following \cite{Gar88} for any fixed $1\leq i\leq m$ we will call each $H_{ij},1 \leq j\leq n $, the icing spin, i.e. $H_{ij}\in\{-1,1\},\forall i,j$. Following \cite{Gar88} further we will call $X_{jk},1\leq j\leq n$, the interaction strength for the bond from site $j$ to site $i$. $T_{ik},1\leq i\leq m,1\leq k\leq n$, will be the threshold for site $k$ in pattern $i$ (we will typically assume that $T_{ik}=0$; however all the results we present below can be modified easily so that they include scenarios where $T_{ik}\neq 0$).

Now, the dynamics presented in (\ref{eq:defdyn}) works by moving from a $t$ to $t+1$ and so on (of course one assumes an initial configuration for say $t=0$). Moreover, the above dynamics will have a fixed point if say there are strengths $X_{jk},1\leq j\leq n,1\leq k\leq m$, such that for any $1\leq i\leq m$
\begin{eqnarray}
& & H_{ik}\mbox{sign} (\sum_{j=1,j\neq k}^{n}H_{ij}X_{jk}-T_{ik})=1\nonumber \\
& \Leftrightarrow & H_{ik}(\sum_{j=1,j\neq k}^{n}H_{ij}X_{jk}-T_{ik})>0,1\leq j\leq n,1\leq k\leq n.\label{eq:defdynfp}
\end{eqnarray}
Now, of course this is a well known property of a very general class of dynamics. In other words, unless one specifies the interaction strengths the generality of the problem essentially makes it easy. In \cite{Gar88} then proceeded and considered the spherical restrictions on $X$. To be more specific the restrictions considered in \cite{Gar88} amount to the following constraints
\begin{equation}
\sum_{j=1}^{n}X_{ji}^2=1,1\leq i\leq n.\label{eq:cosntX}
\end{equation}
There are many interesting questions one can ask about the above dynamics. Here, we will be interested in a specific one, namely the so-called storage capacity.
When it comes to storage capacity, one basically asks how many patterns $m$ ($i$-th pattern being $H_{ij},1\leq j\leq n$) one can store so that there is an assurance that they are stored in a stable way. Moreover, since having patterns being fixed points of the above introduced dynamics is not enough to insure having a finite basin of attraction one often may impose a bit stronger threshold condition
\begin{eqnarray}
& & H_{ik}\mbox{sign} (\sum_{j=1,j\neq k}^{n}H_{ij}X_{jk}-T_{ik})=1\nonumber \\
& \Leftrightarrow & H_{ik}(\sum_{j=1,j\neq k}^{n}H_{ij}X_{jk}-T_{ik})>\kappa,1\leq j\leq n,1\leq k\leq n,\label{eq:defdynfpstr}
\end{eqnarray}
where typically $\kappa$ is a positive number. We will refer to a perceptron governed by the above dynamics and coupled with the spherical restrictions and a positive threshold $\kappa$ as the positive spherical perceptron. Alternatively, when $\kappa$ is negative we will refer to it as the negative spherical perceptron.

Also, we should mentioned that many variants of the model that we study here are possible from a purely mathematical perspective. However, many of them have found applications in various other fields as well. For example, a great set of references that contains a collection of results related to various aspects of neural networks and their bio-applications is  \cite{AgiAnnBarCooTan13a,AgiAnnBarCooTan13b,AgiBarBarGalGueMoa12,AgiBarGalGueMoa12,AgiAstBarBurUgu12}.

Before proceeding further, we briefly mention how the rest of the paper is organized. In Section \ref{sec:knownres} we will present several results that are known for the positive spherical perceptron. In Section \ref{sec:negkappa} we will present the results that can be obtained for the negative spherical perceptron using the methodology that is enough to settle the positive case completely. In Section \ref{sec:neglowstorcap} we will then present a mechanism that can be used to potentially lower the values of the storage capacity of the negative spherical perceptron typically obtained by solely relying on the strategy that is successful in the positive case. In Section \ref{sec:conc} we will discuss obtained results and present several concluding remarks.

\section{Known results}
\label{sec:knownres}

Our main interest in this paper will be understanding of underlying mathematics of the negative spherical perceptron. However, since from a neural networks point of view the positive spherical perceptron is a more realistic (needed) scenario below we will briefly sketch several known results that relate to its storage capacity. In \cite{Gar88} a replica type of approach was designed and based on it a characterization of the storage capacity was presented. Before showing what exactly such a characterization looks like we will first formally define it. Namely, throughout the paper we will assume the so-called linear regime, i.e. we will consider the so-called \emph{linear} scenario where the length and the number of different patterns, $n$ and $m$, respectively are large but proportional to each other. Moreover, we will denote the proportionality ratio by $\alpha$ (where $\alpha$ obviously is a constant independent of $n$) and will set
\begin{equation}
m=\alpha n.\label{eq:defmnalpha}
\end{equation}
Now, assuming that $H_{ij},1\leq i\leq m,1\leq j\leq n$, are i.i.d. symmetric Bernoulli random variables, \cite{Gar88} using the replica approach gave the following estimate for $\alpha$ so that (\ref{eq:defdynfpstr}) holds with overwhelming probability (under overwhelming probability we will in this paper assume a probability that is no more than a number exponentially decaying in $n$ away from $1$)
\begin{equation}
\alpha_c(\kappa)=(\frac{1}{\sqrt{2\pi}}\int_{-\kappa}^{\infty}(z+\kappa)^2e^{-\frac{z^2}{2}}dz)^{-1}.\label{eq:garstorcap}
\end{equation}
Based on the above characterization one then has that $\alpha_c$ achieves its maximum over positive $\kappa$'s as $\kappa\rightarrow 0$. One in fact easily then has
\begin{equation}
\lim_{\kappa\rightarrow 0}\alpha_c(\kappa)=2.\label{eq:garstorcapk0}
\end{equation}
The result given in (\ref{eq:garstorcapk0}) is of course well known and has been rigorously established either as a pure mathematical fact or even in the context of neural networks and pattern recognition \cite{Schlafli,Cover65,Winder,Winder61,Wendel62,Cameron60,Joseph60,BalVen87,Ven86}. In a more recent work \cite{SchTir02,SchTir03,TalBook} the authors also considered the Gardner problem and established that (\ref{eq:garstorcap}) also holds. In our own work \cite{StojnicGardGen13} we also considered the Gardner problem and presented an alternative mathematical approach that was powerful enough to reestablish the storage capacity prediction given in (\ref{eq:garstorcap}). We will below formalize the results obtained in \cite{SchTir02,SchTir03,TalBook,StojnicGardGen13}.

\begin{theorem} \cite{SchTir02,SchTir03,TalBook,StojnicGardGen13}
Let $H$ be an $m\times n$ matrix with $\{-1,1\}$ i.i.d.Bernoulli components. Let $n$ be large and let $m=\alpha n$, where $\alpha>0$ is a constant independent of $n$. Let $\alpha_c$ be as in (\ref{eq:garstorcap}) and let $\kappa\geq 0$ be a scalar constant independent of $n$. If $\alpha>\alpha_c$ then with overwhelming probability there will be no $\x$ such that $\|\x\|_2=1$ and (\ref{eq:defdynfpstr}) is feasible. On the other hand, if $\alpha<\alpha_c$ then with overwhelming probability there will be an $\x$ such that $\|\x\|_2=1$ and (\ref{eq:defdynfpstr}) is feasible.
\label{thm:SchTirTalSto}
\end{theorem}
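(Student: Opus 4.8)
The plan is to recast Theorem~\ref{thm:SchTirTalSto} as a statement about the sign of the optimal value of a random min--max program and to control that value from the appropriate side with Gordon's Gaussian comparison inequality, following the route of \cite{StojnicGardGen13}. First I would reduce to a clean Gaussian feasibility question. Fixing a column index $k$, the symmetry of the $\pm1$ entries makes the products $H_{ik}H_{ij}$ ($j\neq k$) again i.i.d.\ symmetric Bernoulli, so the constraints in (\ref{eq:defdynfpstr}) for the $k$-th column read $\tilde H\x>\kappa\1$ with $\tilde H$ an $m\times(n-1)$ i.i.d.\ $\pm1$ matrix and $\x$ in the unit ball of $\mathbb{R}^{n-1}$ (the coordinate $X_{kk}$ only takes up slack in the norm; since $\kappa\geq0$, feasibility over the ball and over the sphere coincide, because rescaling a feasible $\x$ to unit norm only enlarges the already nonnegative entries of $\tilde H\x$). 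The $n$ column problems are identically distributed, so a union bound reduces everything to one of them; I would then work, as in \cite{StojnicGardGen13}, with i.i.d.\ Gaussian disorder $G\in\mathbb{R}^{m\times n}$, the $\pm1$ statement following from the universality of $\alpha_c$ (or being exactly the content of \cite{SchTir02,SchTir03,TalBook}). Set
\[
\xi_n \;=\; \min_{\|\x\|_2=1}\ \max_{\y\geq\0,\ \|\y\|_2\leq1}\ \big(\kappa\1^{T}\y-\y^{T}G\x\big).
\]
For fixed $\x$ the inner maximum equals $\|(\kappa\1-G\x)_{+}\|_2$, the distance from $G\x$ to $\{\mathbf z\geq\kappa\1\}$; hence $\xi_n>0$ exactly when no $\x\in S^{n-1}$ satisfies $G\x\geq\kappa\1$, so (modulo the harmless passage between $\geq$ and $>$, which does not move $\alpha_c$) the ``infeasible'' half of the theorem is $\{\xi_n>0\}$ with overwhelming probability, and the ``feasible'' half will be obtained by a separate, constructive argument.

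\emph{The infeasible side ($\alpha>\alpha_c$).} Since $G\stackrel{d}{=}-G$, $\xi_n$ has the law of $\min_{\x}\max_{\y}(\kappa\1^{T}\y+\y^{T}G\x)$, to which Gordon's comparison inequality applies in its probabilistic form, which needs only compactness of the feasible sets (no convexity): for every $c$,
\[
\Pr(\xi_n<c)\ \leq\ 2\,\Pr(\phi_n\leq c),\qquad
\phi_n=\min_{\|\x\|_2=1}\ \max_{\y\geq\0,\,\|\y\|_2\leq1}\big(\kappa\1^{T}\y+\g^{T}\y+\|\y\|_2\,\h^{T}\x\big),
\]
with $\g\in\mathbb{R}^{m}$, $\h\in\mathbb{R}^{n}$ independent standard Gaussian. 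The auxiliary program decouples: the inner maximum is $\big(\|(\kappa\1+\g)_{+}\|_2+\h^{T}\x\big)_{+}$, and minimizing over $\x\in S^{n-1}$ gives $\phi_n=\big(\|(\kappa\1+\g)_{+}\|_2-\|\h\|_2\big)_{+}$. By Gaussian concentration, $\|(\kappa\1+\g)_{+}\|_2/\sqrt n\to\sqrt{\alpha\,\mathbb{E}\,(g+\kappa)_{+}^{2}}$ (with $g\sim\mathcal N(0,1)$ and $(\cdot)_+$ the positive part) and $\|\h\|_2/\sqrt n\to1$, so $\phi_n/\sqrt n\to\big(\sqrt{\alpha\,\mathbb{E}(g+\kappa)_{+}^{2}}-1\big)_{+}$. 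Since $\mathbb{E}(g+\kappa)_{+}^{2}=\tfrac{1}{\sqrt{2\pi}}\int_{-\kappa}^{\infty}(z+\kappa)^{2}e^{-z^{2}/2}dz=1/\alpha_c(\kappa)$, this limit is a strictly positive constant $\delta$ whenever $\alpha>\alpha_c(\kappa)$; taking $c=\delta\sqrt n/2$ forces $\xi_n>0$ with overwhelming probability, i.e.\ infeasibility.

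\emph{The feasible side ($\alpha<\alpha_c$).} A lower bound on $\xi_n$ cannot certify $\xi_n=0$, so I would argue by convex separation instead. Choose $\epsilon>0$ with $\alpha<\alpha_c(\kappa+\epsilon)$ (possible since $\alpha_c$ is continuous) and set $\kappa'=\kappa+\epsilon$. The convex bodies $G\,B_2^{n}$ and $\{\mathbf z\geq\kappa'\1\}$ are disjoint iff some $\y\in S^{m-1}$, $\y\geq\0$, separates them, i.e.\ $\|G^{T}\y\|_2\leq\kappa'\1^{T}\y$; hence it suffices to show
\[
\eta_n:=\min_{\|\y\|_2=1,\,\y\geq\0}\ \max_{\|\x\|_2=1}\ \big(\y^{T}G\x-\kappa'\1^{T}\y\big)\ >\ 0\quad\text{with overwhelming probability.}
\]
Gordon's inequality again gives $\Pr(\eta_n<c)\leq2\Pr(\tilde\phi_n\leq c)$ with $\tilde\phi_n=\|\g\|_2-\|(\kappa'\1-\h)_{+}\|_2$, $\g\in\mathbb{R}^{n}$, $\h\in\mathbb{R}^{m}$, and $\tilde\phi_n/\sqrt n\to 1-\sqrt{\alpha\,\mathbb{E}(g+\kappa')_{+}^{2}}=1-\sqrt{\alpha/\alpha_c(\kappa')}>0$. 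So $\eta_n>0$ w.o.p., the two bodies meet, there is $\x$ with $\|\x\|_2\leq1$ and $G\x\geq\kappa'\1>\kappa\1$, and rescaling to the sphere (legitimate because $\kappa\geq0$) yields a genuine strict solution of (\ref{eq:defdynfpstr}). The boundary case $\kappa=0$ ($\alpha_c=2$) is the classical Cover--Wendel count and is recovered from the same estimates in the limit $\kappa\downarrow0$, or quoted from \cite{Cover65,Wendel62}.

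\emph{Main obstacle.} The delicate step is the feasible side: one must identify the correct auxiliary program — the convex-separation dual — so that the only bound Gordon's inequality supplies (a lower bound) actually certifies feasibility, and then upgrade a non-strict, ball feasibility to a strict solution on the sphere. That last upgrade is precisely where the hypothesis $\kappa\geq0$ is used; for $\kappa<0$ it breaks down and the sphere problem becomes genuinely non-convex, which is exactly the gap the later sections of the paper are written to address. Secondary care is required in the Step~1 passage to Gaussian disorder and in converting the $L^1$ limits above into overwhelming-probability statements via Gaussian Lipschitz concentration.
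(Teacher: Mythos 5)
Your proposal is correct and follows essentially the same route as the paper (i.e., as \cite{StojnicGardGen13}, to which the paper defers for this proof): recast feasibility as the sign of the min--max program $\xi_n$, apply Gordon's Gaussian comparison to a decoupled auxiliary program, and read the threshold $\alpha_c$ off the concentration of $\|(\kappa\1+\g)_+\|_2$ and $\|\h\|_2$. Your packaging of the feasible side through the convex-separation dual $\eta_n$ is just the reverse (max--min) ordering of the same comparison that the paper's Theorem \ref{thm:Stoc30} exploits to get the upper bound on $\xi_n$ when $\kappa\geq 0$, so there is no substantive difference in approach.
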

\begin{proof}
Presented in \cite{SchTir02,SchTir03,TalBook,StojnicGardGen13}.
\end{proof}

As mentioned earlier, the results given in the above theorem essentially settle the storage capacity of the positive spherical perceptron or the Gardner problem. However, there are a couple of facts that should be pointed out (emphasized):

1) The results presented above relate to the \emph{positive} spherical perceptron. It is not clear at all if they would automatically translate to the case of the negative spherical perceptron. As mentioned earlier, the case of the negative spherical perceptron may be more of interest from a purely mathematical point of view than it is from say the neural networks point of view. Nevertheless, such a mathematical problem may turn out to be a bit harder than the standard positive case. In fact, in \cite{TalBook}, Talagrand conjectured (conjecture 8.4.4) that the above mentioned $\alpha_c$ remains an upper bound on the storage capacity even when $\kappa<0$, i.e. even in the case of the negative spherical perceptron. However, he does seem to leave it as an open problem what the exact value of the storage capacity in the negative case should be. In our own work \cite{StojnicGardGen13} we confirmed the Talagrand conjecture and showed that even in the negative case $\alpha_c$ from (\ref{eq:garstorcap}) is indeed an upper bound on the storage capacity.

2) It is rather clear but we do mention that the overwhelming probability statement in the above theorem is taken with respect to the randomness of $H$. To analyze the feasibility of (\ref{eq:defprobucor1}) we in \cite{StojnicGardGen13} relied on a mechanism we recently developed for studying various optimization problems in \cite{StojnicRegRndDlt10}. Such a mechanism works for various types of randomness. However, the easiest way to present it was assuming that the underlying randomness is standard normal. So to fit the feasibility of (\ref{eq:defprobucor1}) into the framework of \cite{StojnicRegRndDlt10} we in \cite{StojnicGardGen13} formally assumed that the elements of matrix $H$ are i.i.d. standard normals. In that regard then what was proved in \cite{StojnicGardGen13} is a bit different from what was stated in the above theorem. However, as mentioned in \cite{StojnicGardGen13} (and in more detail in \cite{StojnicRegRndDlt10,StojnicMoreSophHopBnds10}) all our results from \cite{StojnicGardGen13} continue to hold for a very large set of types of randomness and certainly for the Bernouilli one assumed in Theorem \ref{thm:SchTirTalSto}.

3) We will continue to call the critical value of $\alpha$ so that (\ref{eq:defdynfpstr}) is feasible the storage capacity even when $\kappa<0$, even though it may be linguistically a bit incorrect, given the neural network interpretation of finite basins of attraction mentioned above.

4) We should also mention that the results presented in Theorem \ref{thm:SchTirTalSto} relate to what is typically known as the uncorrelated case of the (positive) spherical perceptron. A fairly important alternative is the so-called correlated case, already considered in the introductory paper \cite{Gar88} as well as in many references that followed (see e.g. \cite{GutSte90}). As we have shown in \cite{StojnicGardGen13} one can create an alternative version of the results presented in Theorem \ref{thm:SchTirTalSto} that would correspond to such a case. However, since in this paper we will focus mostly on the standard, i.e. uncorrelated case, we will skip recalling on the corresponding correlated results presented in \cite{StojnicGardGen13}. We do mention though, that all major results that we will present in this paper can be translated to the correlated case as well. Since the derivations in the correlated case are a bit more involved we will presented them in a separate paper.

\section{Negative $\kappa$}
\label{sec:negkappa}

Now, we will proceed forward by actually heavily relying on the above mentioned points 1), 2), and 3). Namely, as far as point 2) goes, we will in this paper without loss of generality again make the same type of assumption that we have made in \cite{StojnicGardGen13} related to the statistics of $H$. Innother words, we will continue to assume that the elements of matrix $H$ are i.i.d. standard normals (as mentioned above, such an assumption changes nothing in the validity of the results that we will present; also, more on this topic can be found in e.g. \cite{StojnicHopBnds10,StojnicLiftStrSec13,StojnicRegRndDlt10} where we discussed it a bit further). On the other hand, as far as points 1) and 3) go, we will below in a theorem summarize what was actually proved in \cite{StojnicGardGen13} (this will also substantially facilitate the exposition that will follow). However, before doing so, we will briefly recall on a few simplifications that we utilized when creating results presented in \cite{StojnicGardGen13}.

As mentioned above under point 4), we have looked in \cite{StojnicGardGen13} and will look in this paper at a variant of the spherical perceptron (or the Gardner problem) that is typically called uncorrelated. In the uncorrelated case, one views all patterns $H_{i,1:n},1\leq i\leq m$, as uncorrelated (as expected, $H_{i,1:n}$ stands for vector $[H_{i1},H_{i2},\dots,H_{in}]$). Now, the following becomes the corresponding version of the question of interest mentioned above: assuming that $H$ is an $m\times n$ matrix with i.i.d. $\{-1,1\}$ Bernoulli entries and that $\|\x\|_2=1$, how large $\alpha=\frac{m}{n}$ can be so that the following system of linear inequalities is satisfied with overwhelming probability
\begin{equation}
H\x\geq \kappa.\label{eq:defprobucor}
\end{equation}
This of course is the same as if one asks how large $\alpha$ can be so that the following optimization problem is feasible with overwhelming probability
\begin{eqnarray}
& & H\x\geq \kappa\nonumber \\
& & \|\x\|_2=1.\label{eq:defprobucor1}
\end{eqnarray}
To see that (\ref{eq:defprobucor}) and (\ref{eq:defprobucor1}) indeed match the above described fixed point condition it is enough to observe that due to statistical symmetry one can assume $H_{i1}=1,1\leq i\leq m$. Also the constraints essentially decouple over the columns of $X$ (so one can then think of $\x$ in (\ref{eq:defprobucor}) and (\ref{eq:defprobucor1}) as one of the columns of $X$). Moreover, the dimension of $H$ in (\ref{eq:defprobucor}) and (\ref{eq:defprobucor1}) should be changed to $m\times (n-1)$; however, since we will consider a large $n$ scenario to make writing easier we keep dimension as $m\times n$. Also, as mentioned under point 2) above, we will, without a loss of generality, treat $H$ in (\ref{eq:defprobucor1}) as if it has i.i.d. standard normal components. Moreover, in \cite{StojnicGardGen13} we also recognized that (\ref{eq:defprobucor1}) can be rewritten as the following optimization problem
\begin{eqnarray}
\xi_n=\min_{\x} \max_{\lambda\geq 0} & &  \kappa\lambda^T\1- \lambda^T H\x \nonumber \\
\mbox{subject to} & & \|\lambda\|_2= 1\nonumber \\
& & \|\x\|_2=1,\label{eq:uncorminmax}
\end{eqnarray}
where $\1$ is an $m$-dimensional column vector of all $1$'s. Clearly, if $\xi_n\leq 0$ then (\ref{eq:defprobucor1}) is feasible. On the other hand, if $\xi_n>0$ then (\ref{eq:defprobucor1}) is not feasible. That basically means that if we can probabilistically characterize the sign of $\xi_n$ then we could have a way of determining $\alpha$ such that $\xi_n\leq 0$. That is exactly what we have done in \cite{StojnicGardGen13} on an ultimate level for $\kappa\geq 0$ and on a say upper-bounding level for $\kappa<0$. Relying on the strategy developed in \cite{StojnicRegRndDlt10,StojnicGorEx10} and on a set of results from \cite{Gordon85,Gordon88} we in \cite{StojnicGardGen13} proved the following theorem:

\begin{theorem} \cite{StojnicGardGen13}
Let $H$ be an $m\times n$ matrix with $\{-1,1\}$ i.i.d. standard normal components. Let $n$ be large and let $m=\alpha n$, where $\alpha>0$ is a constant independent of $n$. Let $\xi_n$ be as in (\ref{eq:uncorminmax}) and let $\kappa$ be a scalar constant independent of $n$. Let all $\epsilon$'s be arbitrarily small constants independent of $n$. Further, let $\g_i$ be a standard normal random variable and set
\begin{equation}
f_{gar}(\kappa)=\frac{1}{\sqrt{2\pi}}\int_{-\kappa}^{\infty}(\g_i+\kappa)^2e^{-\frac{\g_i^2}{2}}d\g_i.\label{eq:fgarlemmaunncorlb}
\end{equation}
Let $\xi_n^{(l)}$ and $\xi_n^{(u)}$ be scalars such that
\begin{eqnarray}
(1-\epsilon_{1}^{(m)})\sqrt{\alpha f_{gar}(\kappa)}-(1+\epsilon_{1}^{(n)})-\epsilon_{5}^{(g)} & > & \frac{\xi_n^{(l)}}{\sqrt{n}}\nonumber \\
(1+\epsilon_{1}^{(m)})\sqrt{\alpha f_{gar}(\kappa)}-(1-\epsilon_{1}^{(n)})+\epsilon_{5}^{(g)} & < & \frac{\xi_n^{(u)}}{\sqrt{n}}.\label{eq:condxinthmstoc30}
\end{eqnarray}
If $\kappa\geq 0$ then
\begin{equation}
 \lim_{n\rightarrow\infty}P(\xi_n^{(l)}\leq \xi_n\leq \xi_n^{(u)})=\lim_{n\rightarrow\infty}P(\min_{\|\x\|_2=1}\max_{\|\lambda\|_2=1,\lambda_i\geq 0}(\xi_n^{(l)}\leq \kappa\lambda^T\1-\lambda^TH\x)\leq \xi_n^{(u)})\geq 1. \label{eq:probthmstoc30poskappa}
\end{equation}
Moreover, if $\kappa< 0$ then
\begin{equation}
 \lim_{n\rightarrow\infty}P(\xi_n\geq \xi_n^{(l)})=\lim_{n\rightarrow\infty}P(\min_{\|\x\|_2=1}\max_{\|\lambda\|_2=1,\lambda_i\geq 0}( \kappa\lambda^T\1-\lambda^TH\x)\geq \xi_n^{(u)})\geq 1. \label{eq:probthmstoc30negkappa}
\end{equation}
\label{thm:Stoc30}
\end{theorem}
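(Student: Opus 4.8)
The plan is to reduce the probabilistic analysis of $\xi_n$ to an application of Gordon's comparison inequalities for Gaussian processes, following the strategy from \cite{StojnicRegRndDlt10,StojnicGorEx10,Gordon85,Gordon88}. First I would observe that the object to be controlled is the min-max value $\xi_n = \min_{\|\x\|_2=1}\max_{\|\lambda\|_2=1,\lambda\geq 0}\left(\kappa\lambda^T\1 - \lambda^T H\x\right)$, in which the only randomness is the Gaussian matrix $H$; the bilinear form $\lambda^T H\x$ is a Gaussian process indexed by the pair $(\x,\lambda)$ over the product of the unit sphere and the nonnegative portion of the unit sphere. The covariance structure of this process depends only on $\langle\x,\x'\rangle$ and $\langle\lambda,\lambda'\rangle$, which is exactly the setting where Gordon's theorem applies: one compares $\lambda^T H\x$ with the simpler process $\g^T\lambda \|\x\|_2 + \h^T\x\|\lambda\|_2 = \g^T\lambda + \h^T\x$ (using $\|\x\|_2=\|\lambda\|_2=1$), where $\g\in\mathbb{R}^m$ and $\h\in\mathbb{R}^n$ have i.i.d. standard normal entries.

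The key steps, in order, would be: (i) apply the appropriate form of Gordon's min-max comparison inequality to get a one-sided stochastic bound relating $\xi_n$ to $\min_{\|\x\|_2=1}\max_{\|\lambda\|_2=1,\lambda\geq 0}\left(\kappa\lambda^T\1 - \g^T\lambda - \h^T\x\right)$ — for $\kappa\geq 0$ one gets the two-sided concentration asserted in \eqref{eq:probthmstoc30poskappa}, while for $\kappa<0$ the direction of the inequality in Gordon's theorem only yields the one-sided bound in \eqref{eq:probthmstoc30negkappa}; (ii) evaluate the resulting simpler optimization: the inner maximization over $\lambda$ decouples as $\max_{\|\lambda\|_2=1,\lambda\geq 0}\lambda^T(\kappa\1 - \g)$, whose value is $\|(\kappa\1-\g)_+\|_2$ (the Euclidean norm of the positive part), while the outer minimization over $\x$ contributes $-\|\h\|_2$; (iii) concentrate these two norms: $\|\h\|_2/\sqrt{n}\to 1$, and $\frac{1}{m}\|(\kappa\1-\g)_+\|_2^2 \to \Exp(\kappa - \g_i)_+^2 = \frac{1}{\sqrt{2\pi}}\int_{-\kappa}^{\infty}(\g_i+\kappa)^2 e^{-\g_i^2/2}d\g_i = f_{gar}(\kappa)$, which explains the appearance of $\sqrt{\alpha f_{gar}(\kappa)}$; (iv) collect the $\epsilon$-slack from the concentration steps into the constants $\epsilon_1^{(m)},\epsilon_1^{(n)},\epsilon_5^{(g)}$ to obtain the stated $\xi_n^{(l)},\xi_n^{(u)}$, and invoke a standard Gaussian concentration (Lipschitz) argument to upgrade convergence in probability to the overwhelming-probability statement.

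The main obstacle I anticipate is getting the direction of Gordon's inequality correct and understanding precisely why it degrades from two-sided to one-sided when $\kappa$ changes sign. Gordon's comparison theorem, in the min-max form, gives an inequality in one direction essentially for free, and the reverse direction requires an additional argument (typically an escape-through-the-mesh / second comparison, or the sharper version valid when the index sets have the right product structure). The role of the sign of $\kappa$ is that the term $\kappa\lambda^T\1$ is linear in $\lambda$ and interacts with the constraint $\lambda\geq 0$ differently depending on its sign — when $\kappa\geq 0$ the optimal $\lambda$ behaves in a way compatible with the reverse comparison, whereas for $\kappa<0$ only the lower bound on $\xi_n$ (equivalently, the claim that $\alpha_c$ is an upper bound on storage capacity, matching Talagrand's conjecture 8.4.4) survives. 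A secondary technical point is handling the $\lambda\geq 0$ constraint throughout the comparison: Gordon's theorem allows arbitrary (closed) index sets, so this is permissible, but one must be careful that the concentration of $\|(\kappa\1-\g)_+\|_2$ is uniform enough, which is where the Lipschitz concentration of Gaussian functionals does the work.
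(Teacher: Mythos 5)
Your proposal is correct and follows essentially the same route as the paper: the paper does not reprove Theorem \ref{thm:Stoc30} but defers to \cite{StojnicGardGen13}, whose strategy --- Gordon's Gaussian min--max comparison reducing $\lambda^T H\x$ to the decoupled process $\g^T\lambda+\h^T\x$, evaluation of $\max_{\|\lambda\|_2=1,\lambda_i\geq 0}\lambda^T(\kappa\1-\g)=\|(\kappa\1-\g)_+\|_2$ and $\min_{\|\x\|_2=1}\h^T\x=-\|\h\|_2$, and concentration yielding $\sqrt{\alpha f_{gar}(\kappa)}-1$ --- is exactly what you describe. You also correctly locate the source of the $\kappa\geq 0$ versus $\kappa<0$ asymmetry: the basic comparison gives only the lower bound on $\xi_n$ for free, and the reverse direction needs an additional argument that is available only for $\kappa\geq 0$.
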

\begin{proof}
Presented in \cite{StojnicGardGen13}.
\end{proof}
In a more informal language (essentially ignoring all technicalities and $\epsilon$'s) one has that as long as
\begin{equation}
\alpha>\frac{1}{f_{gar}(\kappa)},\label{eq:condalphauncorlb}
\end{equation}
the problem in (\ref{eq:defprobucor1}) will be infeasible with overwhelming probability. On the other hand, one has that when $\kappa\geq 0$ as long as
\begin{equation}
\alpha<\frac{1}{f_{gar}(\kappa)},\label{eq:condalphauncorubpos}
\end{equation}
the problem in (\ref{eq:defprobucor1}) will be feasible with overwhelming probability. This of course settles the case $\kappa geq 0$ completely and essentially establishes the storage capacity as $\alpha_c$ which of course matches the prediction given in the introductory analysis given in \cite{Gar88} and of course rigorously confirmed by the results of \cite{SchTir02,SchTir03,TalBook}. On the other hand, when $\kappa < 0$ it only shows that the storage capacity with overwhelming probability is not higher than  As mentioned earlier this confirms Talagrand's conjecture 8.4.4 from \cite{TalBook}. However, it does not settle problem (question) 8.4.2 from \cite{TalBook}. Moreover, it is not even clear if the critical storage capacity when $\kappa<0$ in fact has anything to do with the above mentioned quantity $\frac{1}{f_{gar}(\kappa)}$ beyond that it is not larger than it.

The results obtained based on the above theorem are presented in Figure \ref{fig:alfackappa}. When $\kappa\geq 0$ (i.e. when $\alpha\leq 2$) the curve indicates the exact breaking point between the ``overwhelming" feasibility and infeasibility of (\ref{eq:defprobucor1}). On the other hand, when $\kappa< 0$ (i.e. when $\alpha> 2$) the curve is only an upper bound on the storage capacity, i.e. for any value of the pair $(\alpha,\kappa)$ that is above the curve given in Figure \ref{fig:alfackappa}, (\ref{eq:defprobucor1}) is infeasible with overwhelming probability.
\begin{figure}[htb]
\centering
\centerline{\epsfig{figure=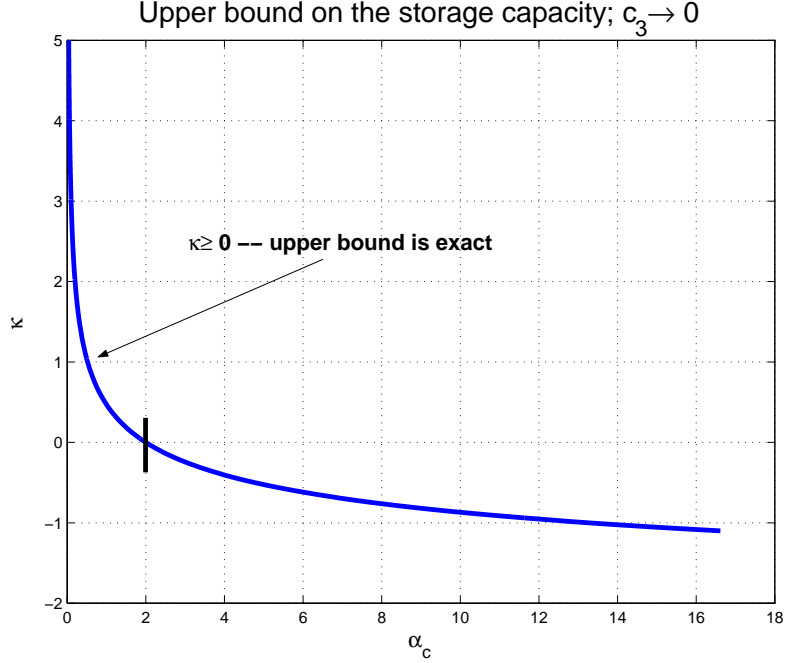,width=10.5cm,height=9cm}}
\caption{$\alpha_c$ as a function of $\kappa$}
\label{fig:alfackappa}
\end{figure}

Below, we present a fairly powerful mechanism that can be used to attack the negative spherical perceptron case beyond what is presented above. The mechanism will rely on a progress we have made recently in a series of papers \cite{StojnicMoreSophHopBnds10,StojnicLiftStrSec13,StojnicLiftLqThrBnds13,StojnicAsymmLittBnds11} when studying various other combinatorial problems. It will essentially attempt to lower the above upper bound on the storage capacity. We should mention though, that when it comes to the spherical perceptron the value of the mechanism that we will present seems more conceptual than practical. Improving on the results that we presented in Theorem \ref{thm:Stoc30} (and on those we presented in \cite{StojnicGardGen13}) does not seem as an easy task (if possible at all) and after studying it for some time we arrived at a point where we view any potential avenue for such an improvement as a nice breakthrough. The preliminary numerical investigation (the results of which we will present below) indicates that one may indeed be able to lower the above value of the storage capacity when $\kappa<0$.

\section{Lowering the storage capacity when $\kappa<0$}
\label{sec:neglowstorcap}

As mentioned above, in this section we look at a strategy that can potentially be used to lower the above upper bound on the storage capacity implied by (\ref{eq:condalphauncorlb}). Before proceeding further we will first recall on the optimization problem that we will consider here. It is basically the one given in (\ref{eq:uncorminmax})
\begin{eqnarray}
\xi_n=\min_{\x} \max_{\lambda\geq 0} & &  \kappa\lambda^T\1- \lambda^T H\x \nonumber \\
\mbox{subject to} & & \|\lambda\|_2= 1\nonumber \\
& & \|\x\|_2=1,\label{eq:uncorminmax1}
\end{eqnarray}
where $\1$ is an $m$-dimensional column vector of all $1$'s. As mentioned below (\ref{eq:uncorminmax1}), a probabilistic characterization of the sign of $\xi_n$ would be enough to determine the storage capacity or its bounds. Below, we provide a way that can be used to probabilistically characterize $\xi_n$. Moreover, since $\xi_n$ will concentrate around its mean for our purposes here it will then be enough to study only its mean $E\xi_n$. We do so by relying on the strategy developed in \cite{StojnicMoreSophHopBnds10} and ultimately on the following set of results from \cite{Gordon85} (the following theorem presented in \cite{StojnicMoreSophHopBnds10} is in fact a slight alternation of the original results from \cite{Gordon85}).
\begin{theorem}(\cite{Gordon85})
\label{thm:Gordonneg1} Let $X_{ij}$ and $Y_{ij}$, $1\leq i\leq n,1\leq j\leq m$, be two centered Gaussian processes which satisfy the following inequalities for all choices of indices
\begin{enumerate}
\item $E(X_{ij}^2)=E(Y_{ij}^2)$
\item $E(X_{ij}X_{ik})\geq E(Y_{ij}Y_{ik})$
\item $E(X_{ij}X_{lk})\leq E(Y_{ij}Y_{lk}), i\neq l$.
\end{enumerate}
Let $\psi_{ij}()$ be increasing functions on the real axis. Then
\begin{equation*}
E(\min_{i}\max_{j}\psi_{ij}(X_{ij}))\leq E(\min_{i}\max_{j}\psi_{ij}(Y_{ij})).
\end{equation*}
Moreover, let $\psi_{ij}()$ be decreasing functions on the real axis. Then
\begin{equation*}
E(\max_{i}\min_{j}\psi_{ij}(X_{ij}))\geq E(\max_{i}\min_{j}\psi_{ij}(Y_{ij})).
\end{equation*}
\begin{proof}
The proof of all statements but the last one is of course given in \cite{Gordon85}. The proof of the last statement trivially follows and in a slightly different scenario is given for completeness in \cite{StojnicMoreSophHopBnds10}.
\end{proof}
\end{theorem}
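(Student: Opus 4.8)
\emph{Proof proposal.} The plan is to prove the first (increasing, $\min$--$\max$) statement by the Gaussian interpolation/smart-path method, and then to deduce the second (decreasing, $\max$--$\min$) statement from it by a sign flip. For the reduction, observe that if each $\psi_{ij}$ is decreasing then each $\widetilde\psi_{ij}(x):=-\psi_{ij}(x)$ is increasing, and for every array of reals $w=(w_{ij})$ one has the identity $\min_i\max_j\widetilde\psi_{ij}(w_{ij})=-\max_i\min_j\psi_{ij}(w_{ij})$. Applying the first statement to the same processes $X,Y$ (whose covariance structure is untouched) with the increasing functions $\widetilde\psi_{ij}$ then gives $E(\max_i\min_j\psi_{ij}(X_{ij}))\ge E(\max_i\min_j\psi_{ij}(Y_{ij}))$, which is exactly the second statement. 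So everything reduces to the first claim.

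For the first claim I would begin with a smoothing step: assume first that each $\psi_{ij}$ is $C^2$ with $\psi_{ij}'\ge 0$ (a general increasing function is an increasing limit of such, and the relevant expectations exist, so the general case follows by a limiting argument at the end), and replace $\min$ and $\max$ by soft analogues. Concretely, fix $\beta>0$, set $g_i(w)=\frac1\beta\log\sum_j e^{\beta w_{ij}}$ and $\Phi_\beta(w)=-\frac1\beta\log\sum_i e^{-\beta g_i(w)}$, so that $\Phi_\beta(w)\to\min_i\max_j w_{ij}$ as $\beta\to\infty$, uniformly on bounded sets, and put $\phi(w):=\Phi_\beta\big((\psi_{ij}(w_{ij}))_{ij}\big)$. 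Then enlarge the probability space so that $X$ and $Y$ are independent, define $Z_{ij}(t)=\sqrt t\,X_{ij}+\sqrt{1-t}\,Y_{ij}$ for $t\in[0,1]$, and apply Gaussian integration by parts (the interpolation formula) to get
\[
\frac{d}{dt}\,E\big[\phi(Z(t))\big]=\frac12\sum_{(i,j)}\sum_{(l,k)}\big(E(X_{ij}X_{lk})-E(Y_{ij}Y_{lk})\big)\,E\big[\partial_{ij}\partial_{lk}\phi(Z(t))\big].
\]

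The heart of the argument is then to split this double sum into three families and show each contributes nonpositively. When $(l,k)=(i,j)$ the coefficient $E(X_{ij}^2)-E(Y_{ij}^2)$ vanishes by hypothesis~1, so the uncontrolled diagonal second derivatives of $\phi$ (which involve $\psi_{ij}''$) never enter. When $l=i$ and $k\ne j$ the coefficient $E(X_{ij}X_{ik})-E(Y_{ij}Y_{ik})$ is $\ge0$ by hypothesis~2; writing $\partial_{ij}\Phi_\beta=p_i q_{ij}$ with $p_i\propto e^{-\beta g_i}$ the soft-$\min$ weight on block $i$ and $q_{ij}\propto e^{\beta w_{ij}}$ the soft-$\max$ weight inside block $i$, a direct computation shows that raising another coordinate of block $i$ decreases both $p_i$ (through $g_i$) and $q_{ij}$, so $\partial_{ij}\partial_{ik}\Phi_\beta\le0$ and hence $\partial_{ij}\partial_{ik}\phi=\psi_{ij}'\psi_{ik}'\,\partial_{ij}\partial_{ik}\Phi_\beta\le0$; the product of coefficient and expectation is thus $\le0$. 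When $l\ne i$ the coefficient $E(X_{ij}X_{lk})-E(Y_{ij}Y_{lk})$ is $\le0$ by hypothesis~3, while raising a coordinate of block $l$ raises $g_l$ and hence $p_i$, giving $\partial_{ij}\partial_{lk}\Phi_\beta\ge0$ and $\partial_{ij}\partial_{lk}\phi\ge0$; again the product is $\le0$. Therefore $\frac{d}{dt}E[\phi(Z(t))]\le0$ on $[0,1]$, so $E[\phi(X)]=E[\phi(Z(1))]\le E[\phi(Z(0))]=E[\phi(Y)]$; letting $\beta\to\infty$ and then undoing the smoothing of the $\psi_{ij}$ yields $E(\min_i\max_j\psi_{ij}(X_{ij}))\le E(\min_i\max_j\psi_{ij}(Y_{ij}))$.

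The step I expect to be the main obstacle is precisely the sign analysis of the mixed second derivatives of the soft $\min$--$\max$ functional $\Phi_\beta$: that differentiation within a block is nonpositive while differentiation across blocks is nonnegative. This is the structural feature that makes Gordon's comparison strictly stronger than the plain Slepian inequality, and it is what forces hypotheses~2 and~3 to appear with the orientations shown; keeping the bookkeeping of the weights $p_i,q_{ij}$ and their derivatives correct is where care is needed. The remaining points — justifying differentiation under the expectation in the interpolation formula and the two limit exchanges ($\beta\to\infty$ and the $C^2$ approximation of the $\psi_{ij}$) — are routine once one notes that the processes are finite-dimensional Gaussian and the functionals involved grow at most polynomially in $\beta$ and at worst like the $\psi_{ij}$.
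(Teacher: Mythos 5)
Your proposal is correct, but it is doing something different from what the paper does: the paper offers no argument at all for this theorem, deferring the first (min--max, increasing $\psi$) statement entirely to Gordon's 1985 paper and remarking that the second (max--min, decreasing $\psi$) statement ``trivially follows,'' with details in the companion reference. Your reduction of the second statement to the first via $\widetilde\psi_{ij}=-\psi_{ij}$ and the identity $\min_i\max_j(-a_{ij})=-\max_i\min_j a_{ij}$ is exactly that trivial step, correctly executed. For the first statement you supply the standard smart-path proof: the interpolation formula is right, and your sign bookkeeping checks out --- with $\partial_{ij}\Phi_\beta=p_iq_{ij}$ one computes $\partial_{ik}\partial_{ij}\Phi_\beta=-\beta p_iq_{ij}q_{ik}(2-p_i)\le 0$ within a block and $\partial_{lk}\partial_{ij}\Phi_\beta=\beta p_ip_lq_{ij}q_{lk}\ge 0$ across blocks, the chain rule with $\psi_{ij}'\ge 0$ preserves these signs, and hypothesis 1 kills the only terms where $\psi''$ would appear. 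Pairing these signs with hypotheses 2 and 3 gives $\frac{d}{dt}E\phi(Z(t))\le 0$, and the two limits ($\beta\to\infty$, then de-smoothing the $\psi_{ij}$) are indeed routine given the integrability of $\psi_{ij}(X_{ij})$ that the statement implicitly assumes. What your version buys is a self-contained argument that makes visible why hypotheses 2 and 3 carry opposite orientations (intra-block versus inter-block curvature of the soft min--max functional); what the paper's citation buys is brevity. Neither reveals a gap in the other.
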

The strategy that we will present below will utilize the above theorem to lift the above mentioned lower bound on $\xi_n$ (of course since we talk in probabilistic terms, under bound on $\xi_n$ we essentially assume a bound on $E\xi_n$). We do mention that in \cite{StojnicGardGen13} we relied on a variant of the above theorem to create a probabilistic lower bound on $\xi_n$. However, the strategy employed in \cite{StojnicGardGen13} relied only on a basic version of the above theorem which assumes $\psi_{ij}(x)=x$. Here, we will substantially upgrade the strategy from \cite{StojnicGardGen13} by looking at a very simple (but way better) different version of $\psi_{ij}()$.

\subsection{Lifting lower bound on $\xi_n$}
\label{sec:uncorgardlb}

In \cite{StojnicMoreSophHopBnds10} we established a lemma very similar to the following one:
\begin{lemma}
Let $A$ be an $m\times n$ matrix with i.i.d. standard normal components. Let $\g$ and $\h$ be $m\times 1$ and $n\times 1$ vectors, respectively, with i.i.d. standard normal components. Also, let $g$ be a standard normal random variable and let $c_3$ be a positive constant. Then
\begin{equation}
E(\max_{\|\x\|_2=1}\min_{\|\lambda\|_2=1,\lambda_i\geq 0}e^{-c_3(-\lambda^T H\x + g +\kappa\lambda^T\1)})\leq E(\max_{\x}\min_{\|\lambda\|_2=1,\lambda_1\geq 0}e^{-c_3(\g^T\lambda+\h^T\x+\kappa\lambda^T\1)}).\label{eq:negexplemma}
\end{equation}\label{lemma:negexplemma}
\end{lemma}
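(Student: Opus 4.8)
The plan is to apply Theorem~\ref{thm:Gordonneg1} (Gordon's comparison inequality) to a suitably chosen pair of Gaussian processes, with the decreasing-function version $\psi_{ij}(x)=e^{-c_3 x}$ playing the role of the nonlinear $\psi$. First I would introduce the two index sets that the $\min$/$\max$ range over: the ``$i$'' index will correspond to the vector $\x$ on the unit sphere and the ``$j$'' index to the vector $\lambda$ on the unit sphere intersected with the nonnegative orthant (in the continuous formulation one treats these as limits of finite nets, exactly as in \cite{StojnicMoreSophHopBnds10,Gordon85}). Define, for each pair $(\x,\lambda)$,
\begin{equation}
X_{\x,\lambda}=-\lambda^T H\x + g, \qquad Y_{\x,\lambda}=\g^T\lambda+\h^T\x,
\end{equation}
both centered Gaussian processes once we note that $H$ has i.i.d.\ standard normal entries and $\g,\h,g$ are independent standard normal vectors/variable. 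The additive term $\kappa\lambda^T\1$ is deterministic given $\lambda$, so it does not affect the covariance structure; it can be absorbed into the (still monotone) function $\psi_{\x,\lambda}(x)=e^{-c_3(x+\kappa\lambda^T\1)}$, which is decreasing in $x$ for each fixed $\lambda$ since $c_3>0$.

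**Verifying the three covariance conditions.**

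The core computation is checking the hypotheses of Theorem~\ref{thm:Gordonneg1}. For the variances, $E(X_{\x,\lambda}^2)=E((\lambda^T H\x)^2)+E(g^2)=\|\lambda\|_2^2\|\x\|_2^2+1=2$ on the sphere, while $E(Y_{\x,\lambda}^2)=\|\lambda\|_2^2+\|\x\|_2^2=2$, so condition~1 holds. For two pairs sharing the same $\x$ but with $\lambda,\lambda'$: $E(X_{\x,\lambda}X_{\x,\lambda'})=(\lambda^T\lambda')\|\x\|_2^2+1=\lambda^T\lambda'+1$, whereas $E(Y_{\x,\lambda}Y_{\x,\lambda'})=\lambda^T\lambda'+\|\x\|_2^2=\lambda^T\lambda'+1$; these are equal, which certainly satisfies the required inequality $E(X_{\x,\lambda}X_{\x,\lambda'})\geq E(Y_{\x,\lambda}Y_{\x,\lambda'})$ (the ``same-$i$'' condition, item~2). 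For two pairs with $\x\neq\x'$: $E(X_{\x,\lambda}X_{\x',\lambda'})=(\lambda^T\lambda')(\x^T\x')+1$ and $E(Y_{\x,\lambda}Y_{\x',\lambda'})=\lambda^T\lambda'+\x^T\x'$, and the needed inequality for the ``different-$i$'' case (item~3) is $(\lambda^T\lambda')(\x^T\x')+1\leq \lambda^T\lambda'+\x^T\x'$, i.e. $(1-\lambda^T\lambda')(1-\x^T\x')\geq 0$, which holds because both factors are nonnegative (Cauchy--Schwarz on unit vectors). With $\psi$ decreasing, Theorem~\ref{thm:Gordonneg1} then gives $E(\max_{\x}\min_{\lambda}\psi(X_{\x,\lambda}))\geq E(\max_{\x}\min_{\lambda}\psi(Y_{\x,\lambda}))$. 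A subtle sign point: the lemma as stated has the inequality running the \emph{other} way ($\leq$), so I would need to be careful about which process is labelled $X$ and which $Y$ — in fact since condition~2 is an \emph{equality} here, the comparison can be run in both directions for that pair, but condition~3 is a genuine inequality and fixes the direction; the correct assignment is the one that makes $H$-process the one bounded above by the ``decoupled'' $\g,\h$-process, matching the orientation in (\ref{eq:negexplemma}).

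**The main obstacle.**

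The routine part is the covariance algebra above; the delicate part is the passage from a finite index set (where Theorem~\ref{thm:Gordonneg1} literally applies) to the continuous sphere, together with getting the direction of the inequality and the placement of the deterministic term $\kappa\lambda^T\1$ exactly right. I would handle the discretization by the standard argument: replace the spheres by finite $\epsilon$-nets, apply the theorem, and use continuity/compactness plus the uniform Lipschitz behaviour of $\psi$ on the relevant (bounded) range of its argument to take $\epsilon\to 0$; this is exactly the mechanism used in \cite{StojnicMoreSophHopBnds10}, so I would invoke it rather than redo it. The one genuine checkpoint is that $\psi_{\x,\lambda}$ must be monotone \emph{as a function of the process value $x$} for every fixed pair — which it is, since $\kappa\lambda^T\1$ is a constant shift and $x\mapsto e^{-c_3(x+\text{const})}$ is strictly decreasing — so Theorem~\ref{thm:Gordonneg1}'s second (decreasing-$\psi$) conclusion applies verbatim and yields precisely (\ref{eq:negexplemma}) after renaming.
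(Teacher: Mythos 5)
Your proposal takes exactly the route the paper intends (and omits): a direct application of Theorem~\ref{thm:Gordonneg1} with the decreasing function $\psi(x)=e^{-c_3 x}$, the deterministic term $\kappa\lambda^T\1$ absorbed as a constant shift, and the three covariance conditions checked on the unit sphere, so the argument is correct and essentially identical in approach. One small algebra slip worth fixing: $(\lambda^T\lambda')(\x^T\x')+1\leq \lambda^T\lambda'+\x^T\x'$ is equivalent to $(1-\lambda^T\lambda')(1-\x^T\x')\leq 0$ (not $\geq 0$), which is precisely why condition~3 forces the decoupled process to play the role of $X$ and the $H$-process that of $Y$ --- the assignment you correctly settle on in your ``subtle sign point,'' after which the decreasing-$\psi$ conclusion yields (\ref{eq:negexplemma}).
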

\begin{proof}
As mentioned in \cite{StojnicMoreSophHopBnds10}, the proof is a standard/direct application of Theorem \ref{thm:Gordonneg1}. We will omit the details since they are pretty much the same as the those in the proof of the corresponding lemma in \cite{StojnicMoreSophHopBnds10}. However, we do mention that the only difference between this lemma and the one in \cite{StojnicMoreSophHopBnds10} is in the structure of the sets of allowed values for $\x$ and $\lambda$. However, such a difference introduces no structural changes in the proof.
\end{proof}

Following step by step what was done after Lemma 3 in \cite{StojnicMoreSophHopBnds10} one arrives at the following analogue of \cite{StojnicMoreSophHopBnds10}'s equation $(57)$:
\begin{equation}
\hspace{-.5in}E(\min_{\|\x\|_2=1}\max_{\|\lambda\|_2=1,\lambda_i\geq 0}(-\lambda^T H\x+\kappa\lambda^T\1))\geq
\frac{c_3}{2}-\frac{1}{c_3}\log(E(\max_{\|\x\|_2=1}(e^{-c_3\h^T\x})))
-\frac{1}{c_3}\log(E(\min_{\|\lambda\|_2=1,\lambda_i\geq 0}(e^{-c_3(\g^T\lambda+\kappa\lambda^T\1)}))).\label{eq:chneg8}
\end{equation}
Let $c_3=c_3^{(s)}\sqrt{n}$ where $c_3^{(s)}$ is a constant independent of $n$. Then (\ref{eq:chneg8}) becomes
\begin{eqnarray}
\hspace{-.5in}\frac{E(\min_{\|\x\|_2=1}\max_{\|\lambda\|_2=1,\lambda_i\geq 0}(-\lambda^T H\x+\kappa\lambda^T\1))}{\sqrt{n}}
& \geq &
\frac{c_3^{(s)}}{2}-\frac{1}{nc_3^{(s)}}\log(E(\max_{\|\x\|_2=1}(e^{-c_3^{(s)}\sqrt{n}\h^T\x})))\nonumber \\
& - & \frac{1}{nc_3^{(s)}}\log(E(\min_{\|\lambda\|_2=1,\lambda_i\geq 0}(e^{-c_3^{(s)}\sqrt{n}(\g^T\lambda+\kappa\lambda^T\1)})))\nonumber \\
& = &-(-\frac{c_3^{(s)}}{2}+I_{sph}(c_3^{(s)})+I_{per}(c_3^{(s)},\alpha,\kappa)),\label{eq:chneg9}
\end{eqnarray}
where
\begin{eqnarray}
I_{sph}(c_3^{(s)}) & = & \frac{1}{nc_3^{(s)}}\log(E(\max_{\|\x\|_2=1}(e^{-c_3^{(s)}\sqrt{n}\h^T\x})))\nonumber \\
I_{per}(c_3^{(s)},\alpha,\kappa) & = & \frac{1}{nc_3^{(s)}}\log(E(\min_{\|\lambda\|_2=1,\lambda_i\geq 0}(e^{-c_3^{(s)}\sqrt{n}(\g^T\lambda+\kappa\lambda^T\1)}))).\label{eq:defIs}
\end{eqnarray}
Moreover, \cite{StojnicMoreSophHopBnds10} also established
\begin{equation}
\hspace{-.5in}I_{sph}(c_3^{(s)}) = \frac{1}{nc_3^{(s)}}\log(E(\max_{\|\x\|_2=1}(e^{-c_3^{(s)}\sqrt{n}\h^T\x})))
 \doteq\widehat{\gamma^{(s)}}-\frac{1}{2c_3^{(s)}}\log(1-\frac{c_3^{(s)}}{2\widehat{\gamma^{(s)}}}),\label{eq:ubmorsoph}
\end{equation}
where
\begin{equation}
\widehat{\gamma^{(s)}}=\frac{2c_3^{(s)}+\sqrt{4(c_3^{(s)})^2+16}}{8},\label{eq:gamaiden3}
\end{equation}
and $\doteq$ stands for equality when $n\rightarrow\infty$ (as mentioned in \cite{StojnicMoreSophHopBnds10}, $\doteq$ in (\ref{eq:ubmorsoph}) is exactly what was shown in \cite{SPH}.

To be able to use the bound in (\ref{eq:chneg8}) we would also need a characterization of $I_{per}(c_3^{(s)},\alpha,\kappa)$. Below we provide a way to characterize $I_{per}(c_3^{(s)},\alpha,\kappa)$. Let $f(\lambda)=-\g^T\lambda-\kappa\lambda^T\1$ and
we start with the following line of identities
\begin{multline}
\min_{\|\lambda\|_2=1,\lambda_i\geq 0}f(\lambda;\g,\kappa)=\min_{\|\lambda\|_2=1,\lambda_i\geq 0}(-\g^T\lambda-\kappa\lambda^T\1)
=\min_{\lambda_i\geq 0}\max_{\gamma_{per}\geq 0} -\g^T\lambda-\kappa\lambda^T\1
+\gamma_{per}\sum_{i=1}^{m}\lambda_i^2-\gamma_{per}\\
=\max_{\gamma_{per}\geq 0}\min_{\lambda_i\geq 0} -\g^T\lambda-\kappa\lambda^T\1+\gamma_{per}\sum_{i=1}^{m}\lambda_i^2-\gamma_{per}=\max_{\gamma_{per}\geq 0} -\frac{1}{4\gamma_{per}}\left (\sum_{i=1}^{m}(\max(\g_i+\kappa,0))^2\right )-\gamma_{per}\\
=\max_{\gamma_{per}\geq 0} \frac{f_1(\g,\kappa)}{4\gamma_{per}}+\gamma_{per},\label{eq:seceq1}
\end{multline}
where
\begin{equation}
f_1(\g,\kappa)=\left (\sum_{i=1}^{m}(\max(\g_i+\kappa,0))^2\right ).\label{eq:deff1}
\end{equation}
Then
\begin{multline}
\hspace{-.3in}I_{per}(c_3^{(s)},\alpha,\kappa)  =  \frac{1}{nc_3^{(s)}}\log(E(\min_{\|\lambda\|_2=1,\lambda_i\geq 0}(e^{-c_3^{(s)}\sqrt{n}(\g^T\lambda+\kappa\lambda^T\1)}))) = \frac{1}{nc_3^{(s)}}\log(E(\min_{\|\lambda\|_2=1,\lambda_i\geq 0}(e^{c_3^{(s)}\sqrt{n}f(\lambda;\g,\kappa))})))\\=\frac{1}{nc_3^{(s)}}\log(Ee^{c_3^{(s)}\sqrt{n}\max_{\gamma_{per}\geq 0}(\frac{f_1(\g,\kappa)}{4\gamma_{per}}+\gamma_{per})})
\doteq \frac{1}{nc_3^{(s)}}\max_{\gamma_{per}\geq 0}\log(Ee^{c_3^{(s)}\sqrt{n}(\frac{f_1(\g,\kappa)}{4\gamma_{per}}+\gamma_{per})})\\
=\max_{\gamma_{per}\geq 0}(\frac{\gamma_{per}}{\sqrt{n}}+\frac{1}{nc_3^{(s)}}\log(Ee^{c_3^{(s)}\sqrt{n}(\frac{f_1(\g,\kappa)}{4\gamma_{per}})})),\label{eq:gamaiden1sec}
\end{multline}
where, as earlier, $\doteq$ stands for equality when $n\rightarrow \infty$ and would be obtained through the mechanism presented in \cite{SPH}. Now if one sets $\gamma_{per}=\gamma_{per}^{(s)}\sqrt{n}$ then (\ref{eq:gamaiden1sec}) gives
\begin{multline}
I_{per}(c_3^{(s)},\alpha,\kappa)
\doteq\max_{\gamma_{per}\geq 0}(\frac{\gamma_{per}}{\sqrt{n}}+\frac{1}{nc_3^{(s)}}\log(Ee^{c_3^{(s)}\sqrt{n}(\frac{f_1(\g,\kappa)}{4\gamma_{per}})})\\
=\max_{\gamma_{per}^{(s)}\geq 0}(\gamma_{per}^{(s)}+\frac{\alpha}{c_3^{(s)}}\log(Ee^{(\frac{c_3^{(s)}(\max(\g_i+\kappa,0))^2}{4\gamma_{per}^{(s)}})})
=\max_{\gamma_{per}^{(s)}\geq 0}(\gamma_{per}^{(s)}+\frac{\alpha}{c_3^{(s)}}\log(I_{per}^{(1)}(c_3^{(s)},\gamma_{per}^{(s)},\kappa))),\label{eq:gamaiden2sec}
\end{multline}
where
\begin{equation}
I_{per}^{(1)}(c_3^{(s)},\gamma_{per}^{(s)},\kappa) = Ee^{(\frac{c_3^{(s)}(\max(\g_i+\kappa,0))^2}{4\gamma_{per}^{(s)}})}.\label{eq:defI1I2sec}
\end{equation}
A combination of (\ref{eq:gamaiden2sec}) and (\ref{eq:defI1I2sec}) is then enough to enable us to use the bound in (\ref{eq:chneg8}). However, one can be a bit more explicit when it comes to $I_{per}^{(1)}(c_3^{(s)},\gamma_{per}^{(s)},\kappa)$. Set
\begin{eqnarray}
p & = & 1+\frac{c_3^{(s)}}{2\gamma_{per}^{(s)}}\nonumber \\
q & = & \frac{c_3^{(s)}\kappa}{2\gamma_{per}^{(s)}}\nonumber \\
r & = & \frac{c_3^{(s)}\kappa^2}{4\gamma_{per}^{(s)}}\nonumber \\
s & = & -\kappa\sqrt{p}+\frac{q}{\sqrt{p}} \nonumber \\
C & = & \frac{exp(\frac{q^2}{2p}-r)}{\sqrt{p}}.\label{eq:helpdef}
\end{eqnarray}
Then
\begin{equation}
I_{per}^{(1)}(c_3^{(s)},\gamma_{per}^{(s)},\kappa)=\frac{1}{2}erfc(\frac{\kappa}{\sqrt{2}})+\frac{C}{2}(erfc(\frac{s}{\sqrt{2}})).\label{eq:Iper1}
\end{equation}

We summarize the results from this section in the following theorem.

\begin{theorem}
Let $H$ be an $m\times n$ matrix with $\{-1,1\}$ i.i.d. standard normal components. Let $n$ be large and let $m=\alpha n$, where $\alpha>0$ is a constant independent of $n$. Let $\xi_n$ be as in (\ref{eq:uncorminmax}) and let $\kappa<0$ be a scalar constant independent of $n$. Let all $\epsilon$'s be arbitrarily small constants independent of $n$. Further, let $\g_i$ be a standard normal random variable. Set
\begin{equation}
\widehat{\gamma^{(s)}}=\frac{2c_3^{(s)}+\sqrt{4(c_3^{(s)})^2+16}}{8},\label{eq:gamaliftthm}
\end{equation}
and
\begin{equation}
I_{sph}(c_3^{(s)}) = \widehat{\gamma^{(s)}}-\frac{1}{2c_3^{(s)}}\log(1-\frac{c_3^{(s)}}{2\widehat{\gamma^{(s)}}}).\label{eq:Isphthm}
\end{equation}
Let $I_{per}^{(1)}(c_3^{(s)},\gamma_{per}^{(s)},\kappa)$ be defined through (\ref{eq:helpdef}) and (\ref{eq:Iper1}). Set
\begin{equation}
I_{per}(c_3^{(s)},\alpha,\kappa)=\max_{\gamma_{per}^{(s)}\geq 0}(\gamma_{per}^{(s)}+\frac{1}{c_3^{(s)}}\log(I_{per}^{(1)}(c_3^{(s)},\gamma_{per}^{(s)},\kappa))).\label{eq:Iperthm}
\end{equation}
If $\alpha$ is such that
\begin{equation}
\min_{c_3^{(s)}\geq 0}(-\frac{c_3^{(s)}}{2}+I_{sph}(c_3^{(s)})+I_{per}(c_3^{(s)},\alpha,\kappa))<0,\label{eq:condliftsphnegthm}
\end{equation}
then (\ref{eq:defprobucor1}) is infeasible with overwhelming probability.
\label{thm:liftnegsphper}
\end{theorem}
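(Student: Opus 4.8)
The plan is to turn the Gordon comparison of Lemma~\ref{lemma:negexplemma} into a deterministic lower bound on $E\xi_n$ and then transfer it to $\xi_n$ by concentration. By the remark following (\ref{eq:uncorminmax}), $\xi_n\leq 0$ is equivalent to feasibility of (\ref{eq:defprobucor1}), so it suffices to show that under hypothesis (\ref{eq:condliftsphnegthm}) one has $\xi_n>0$ with overwhelming probability. The map $H\mapsto\xi_n$ is a min--max over $(\x,\lambda)$ in a product of unit spheres of functions of $H$ each of which has Frobenius--Lipschitz constant $\|\x\|_2\|\lambda\|_2=1$, hence $\xi_n$ is itself $1$-Lipschitz in $H$, and the classical Gaussian concentration inequality gives $P(|\xi_n-E\xi_n|>t)\leq 2e^{-t^2/2}$. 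Thus it is enough to prove $E\xi_n\geq\delta\sqrt n$ for some fixed $\delta>0$.

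To obtain such a bound I would first run the exponential-moment argument of \cite{StojnicMoreSophHopBnds10} on the inequality in Lemma~\ref{lemma:negexplemma}. Writing $\max_\x\min_\lambda e^{-c_3(\cdot)}=e^{-c_3\min_\x\max_\lambda(\cdot)}$, using independence of the auxiliary $g$ (which contributes $Ee^{-c_3 g}=e^{c_3^2/2}$, i.e. the $\tfrac{c_3}{2}$ term) and of $\g,\h$, and applying $-\tfrac1{c_3}\log(\cdot)$ together with Jensen's inequality $E\xi_n\geq-\tfrac1{c_3}\log Ee^{-c_3\xi_n}$, the right-hand side decouples into a spherical factor in $\x$ and a perceptron factor in $\lambda$; this is precisely (\ref{eq:chneg8}), whose left-hand side equals $E\xi_n$ by (\ref{eq:uncorminmax}). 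Rescaling $c_3=c_3^{(s)}\sqrt n$ and dividing by $\sqrt n$ produces (\ref{eq:chneg9}) with $I_{sph},I_{per}$ as in (\ref{eq:defIs}). For the spherical part I would quote the evaluation (\ref{eq:ubmorsoph})--(\ref{eq:gamaiden3}), proved via the mechanism of \cite{SPH} in \cite{StojnicMoreSophHopBnds10}, giving $I_{sph}(c_3^{(s)})$ as in (\ref{eq:Isphthm}).

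The new work is the evaluation of $I_{per}$. Here I would dualize the inner minimization: attaching a multiplier $\gamma_{per}\geq 0$ to the constraint $\sum_i\lambda_i^2=1$ and exchanging $\min_\lambda$ with $\max_{\gamma_{per}}$ (the inner problem being separable and convex in each $\lambda_i\geq 0$, with explicit minimizer $\lambda_i=\max(\g_i+\kappa,0)/(2\gamma_{per})$), one gets $\min_{\|\lambda\|_2=1,\lambda_i\geq0}f(\lambda;\g,\kappa)=\max_{\gamma_{per}\geq0}\big(\tfrac{f_1(\g,\kappa)}{4\gamma_{per}}+\gamma_{per}\big)$ with $f_1$ as in (\ref{eq:deff1}); this is (\ref{eq:seceq1}). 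Inserting this into the definition of $I_{per}$, pulling $\max_{\gamma_{per}}$ outside $\tfrac1{nc_3^{(s)}}\log E(\cdot)$ (legitimate as $n\to\infty$ by the saddle-point/concentration argument of \cite{SPH}) and rescaling $\gamma_{per}=\gamma_{per}^{(s)}\sqrt n$, the expectation factors over the $m=\alpha n$ i.i.d. coordinates of $\g$ into an $\alpha n$-th power of the single Gaussian moment $I_{per}^{(1)}$ of (\ref{eq:defI1I2sec}), giving (\ref{eq:gamaiden2sec}) and hence (\ref{eq:Iperthm}). Evaluating $I_{per}^{(1)}$ is a completing-the-square exercise: splitting the Gaussian integral according to the sign of $\g_i+\kappa$, the branch $\g_i\leq-\kappa$ contributes $\tfrac12\mathrm{erfc}(\kappa/\sqrt2)$, while on $\g_i>-\kappa$ the exponent $\tfrac{c_3^{(s)}}{4\gamma_{per}^{(s)}}(\g_i+\kappa)^2-\tfrac{\g_i^2}{2}$ is rearranged via the substitutions $p,q,r,s,C$ of (\ref{eq:helpdef}) to give $\tfrac{C}{2}\mathrm{erfc}(s/\sqrt2)$, which is (\ref{eq:Iper1}). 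Finally, since (\ref{eq:chneg9}) holds for every $c_3^{(s)}\geq 0$, taking the supremum of its right-hand side over $c_3^{(s)}$ yields $E\xi_n/\sqrt n\geq-\min_{c_3^{(s)}\geq0}(-\tfrac{c_3^{(s)}}{2}+I_{sph}+I_{per})$; if (\ref{eq:condliftsphnegthm}) holds this is a positive constant, and the concentration step above completes the proof.

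I expect the main obstacle to be making the two ``$\doteq$'' steps rigorous, i.e. exchanging $\max_{\gamma_{per}}$ with $\tfrac1{nc_3^{(s)}}\log E(\cdot)$ and identifying the large-$n$ exponential rate of $Ee^{c_3^{(s)}\sqrt n(f_1(\g,\kappa)/(4\gamma_{per}))}$ with its Laplace/saddle-point value; this is exactly the content of the machinery of \cite{SPH}, and it requires checking that the optimal $\gamma_{per}^{(s)}$ and $c_3^{(s)}$ stay in a compact set bounded away from $0$ and $\infty$ so that the required concentration of $f_1(\g,\kappa)=\sum_i(\max(\g_i+\kappa,0))^2$ and the interchange hold uniformly, and that the moment-generating condition $p>0$ (equivalently $\gamma_{per}^{(s)}>0$) needed for $I_{per}^{(1)}$ to be finite is respected over the relevant range. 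A secondary, purely computational point is verifying the closed form (\ref{eq:Iper1}) for the Gaussian integral.
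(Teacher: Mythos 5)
Your proposal is correct and follows essentially the same route as the paper: the paper's proof of Theorem~\ref{thm:liftnegsphper} simply invokes the preceding derivation---Lemma~\ref{lemma:negexplemma}, the exponential/Jensen step leading to (\ref{eq:chneg8})--(\ref{eq:chneg9}), the spherical evaluation (\ref{eq:ubmorsoph}) from \cite{SPH,StojnicMoreSophHopBnds10}, the Lagrangian dualization (\ref{eq:seceq1}) and the Gaussian integral (\ref{eq:Iper1}) for $I_{per}$---followed by optimization over $c_3^{(s)}$, exactly as you describe. Your explicit Lipschitz/concentration step for passing from $E\xi_n$ to $\xi_n$, and your flagging of the ``$\doteq$'' interchanges as the points requiring the \cite{SPH} machinery, are consistent with (and slightly more careful than) the paper's own treatment.
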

\begin{proof}
Follows from the previous discussion by combining (\ref{eq:uncorminmax1}) and (\ref{eq:chneg9}), and  by noting that the bound given in (\ref{eq:chneg9}) holds for any $c_3^{(s)}\geq 0$ and could therefore be tightened by additionally optimizing over $c_3^{(s)}\geq 0$.
\end{proof}

The results one can obtain for the storage capacity based on the above theorem are presented in Figure \ref{fig:liftsphneg}. We should mention that the results presented in Figure \ref{fig:liftsphneg} should be taken only as an illustration. They are obtained as a result of a numerical optimization. Remaining finite precision errors are of course possible and could affect the validity of the obtained results (we do believe though, that the results presented in Figure \ref{fig:liftsphneg} are actually fairly close to what Theorem \ref{thm:liftnegsphper} predicts). Also, we should emphasize that the results presented in Theorem \ref{thm:liftnegsphper} are completely rigorous; it is just that some of the numerical work may have introduced a bit of imprecision.

Also, we found that a visible improvement in the values of the storage capacity (essentially a lowering of the upper bound values obtained in Theorem \ref{thm:Stoc30} and presented in Figure \ref{fig:alfackappa}) happens only for values of $\alpha$ substantially larger than $2$ (the results obtained based on Theorem \ref{thm:Stoc30} were presented in Figure \ref{fig:alfackappa}; we included them in Figure \ref{fig:liftsphneg} as well and denoted them as $c_3\rightarrow 0$ curve). That is of course the reason why we selected such a range for presenting the storage capacity upper bounds in Figure \ref{fig:liftsphneg}. Also, given the shape of the storage capacity bounds curves, even in such a range the improvement can not be easily seen in the given plot. For that reason we in Tables \ref{tab:liftsphnegtab1} and \ref{tab:liftsphnegtab2} give the concrete values we obtained for the storage capacity upper bounds (as well as those we obtained for parameters $c_3^{(s)}$ and $\gamma_{per}^{(s)}$) based on Theorem \ref{thm:liftnegsphper}. We denote by $\alpha_c^{(u,low)}$ the smallest $\alpha$ such that (\ref{eq:condliftsphnegthm}) holds. Along the same lines we denote by $\alpha_c^{(u)}$ the smallest $\alpha$ such that (\ref{eq:condalphauncorlb}) holds. In fact, $\alpha_c^{(u)}$ can also be obtained from Theorem \ref{thm:liftnegsphper} by taking $c_3^{(s)}\rightarrow 0$ and consequently $\gamma_{per}^{(s)}\rightarrow\frac{1}{2}$.

\begin{table}
\caption{Lowered upper bounds on $\alpha_c$ -- lower $\alpha$/higher $\kappa$ regime; optimized parameters}\vspace{.1in}
\hspace{-0in}\centering
\begin{tabular}{||l||c|c|c|c|c||}\hline\hline
 \hspace{.7in}$\kappa$                                                 & $-0.5$   & $-0.6$   & $-0.63$   & $-0.65$  & $-0.67$  \\ \hline\hline
 \hspace{.7in}$c_{3}^{(s)}$                                            & $0.0000$ & $0.0000$ & $0.0274$  & $0.0943$ & $0.1597$ \\ \hline
 \hspace{.7in}$\gamma_{per}^{(s)}$                                     & $0.5000$ & $0.5000$ & $0.4932$  & $0.4770$ & $0.4617$ \\ \hline
 \hspace{.7in}$\alpha_{c}^{(u,low)}$                                   & $\mathbf{4.7700}$ & $\mathbf{5.7787}$ & $\mathbf{6.12834}$ &
 $\mathbf{6.3737}$ & $\mathbf{6.6290}$ \\ \hline
 $\alpha_{c}^{(u)}$; $c_3^{(s)}\rightarrow 0$,
 $\gamma_{per}^{(s)}\rightarrow \frac{1}{2}$                           & $4.7700$ & $5.7787$ & $6.12847$ & $6.3754$ & $6.6339$ \\ \hline\hline
\end{tabular}
\label{tab:liftsphnegtab1}
\end{table}

\begin{table}
\caption{Lowered upper bounds on $\alpha_c$ -- higher $\alpha$/lower $\kappa$ regime; optimized parameters}\vspace{.1in}
\hspace{-0in}\centering
\begin{tabular}{||l||c|c|c|c|c||}\hline\hline
 \hspace{.7in}$\kappa$                                                 & $-0.7$   & $-0.8$   & $-0.9$    & $-1$      & $-1.1$  \\ \hline\hline
 \hspace{.7in}$c_{3}^{(s)}$                                            & $0.2555$ & $0.5591$ & $0.8470$  & $1.1266$  & $1.4029$ \\ \hline
 \hspace{.7in}$\gamma_{per}^{(s)}$                                     & $0.4402$ & $0.3794$ & $0.3312$  & $0.2922$  & $0.2600$ \\ \hline
 \hspace{.7in}$\alpha_{c}^{(u,low)}$                                   & $\mathbf{7.0313}$ & $\mathbf{8.5631}$ & $\mathbf{10.4484}$
 & $\mathbf{12.784}$  & $\mathbf{15.6977}$ \\ \hline
 $\alpha_{c}^{(u)}$; $c_3^{(s)}\rightarrow 0$,
 $\gamma_{per}^{(s)}\rightarrow \frac{1}{2}$                           & $7.0448$ & $8.6431$ & $10.6755$ & $13.2731$ & $16.6155$ \\ \hline\hline
\end{tabular}
\label{tab:liftsphnegtab2}
\end{table}

\begin{figure}[htb]
\centering
\centerline{\epsfig{figure=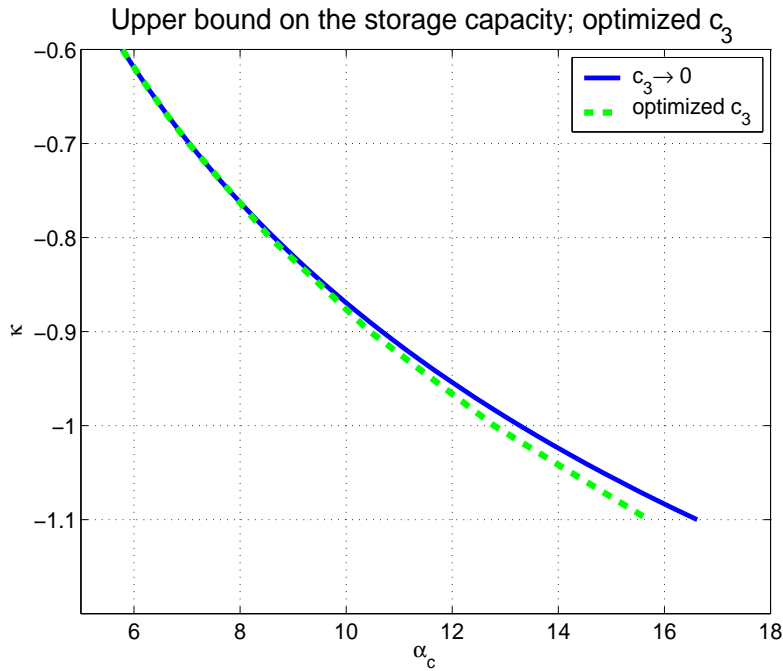,width=10.5cm,height=9cm}}
\caption{$\kappa$ as a function of $\alpha_{c}^{(u,low)}$}
\label{fig:liftsphneg}
\end{figure}

\section{Conclusion}
\label{sec:conc}

In this paper we revisited the storage capacity of the classical spherical perceptron problem. The storage capacity of the spherical perceptron has been settled in \cite{SchTir02,SchTir03} (and later reestablished in \cite{TalBook,StojnicGardGen13}). However, these results focused mostly on the so-called positive version of the corresponding mathematical problem. Here, we looked at the negative counterpart as well.

In our recent work \cite{StojnicGardGen13} we provided a mechanism that was powerful enough to reestablished the results for the storage capacity of the positive spherical perceptron. Moreover, we were able to use it to obtain the upper bounds on the storage capacity even in the case of negative spherical perceptron. These upper bounds are essentially of the same shape as the corresponding ones in the positive case (which in effect confirmed a conjecture of Talagrand given in \cite{TalBook}). Here, on the other hand, we designed a more powerful version of the mechanism given in \cite{StojnicGardGen13} that can be used to lower these upper bounds. In fact, as limited numerical calculations indicate the upper bounds obtained based on the mechanism presented in \cite{StojnicGardGen13} can indeed be lowered.

Of course, various other features of the spherical perceptron are also of interest. The framework that we presented here can be used to analyze pretty much all of them. However, here we focused only on the storage capacity as it is one of the most well known/studied ones. The results we presented though seem to indicate a phenomenon which to a degree applies to quite a few of these features. Namely, it is quite possible that the underlying mathematics describing the behavior of the spherical perceptron substantially changes as one moves from the positive (i.e. $\kappa\geq 0$) to the negative (i.e. $\kappa<0$) thresholding .

Also, as mentioned on several occasions throughout the paper, we in this paper focused on the standard uncorrelated version of the spherical perceptron. When it comes to the storage capacity it is often of quite an interest to study the correlated counterpart as well. Conceptually, it is not that hard to translate the results we presented here to the correlated case. However, the presentation of such a scenario is a bit more cumbersome and we will discuss it elsewhere.

Along the same lines, we should emphasize that in order to maintain the easiness of the exposition throughout the paper we presented a collection of theoretical results for a particular type of randomness, namely the standard normal one. However, as was the case when we studied the Hopfield models in \cite{StojnicHopBnds10,StojnicMoreSophHopBnds10} as well as the Gardner problem in \cite{StojnicGardGen13}, all results that we presented can easily be extended to cover a wide range of other types of randomness. There are many ways how this can be done (and the rigorous proofs are not that hard either). Typically they all would boil down to a repetitive use of the central limit theorem. For example, a particularly simple and elegant approach would be the one of Lindeberg \cite{Lindeberg22}. Adapting our exposition to fit into the framework of the Lindeberg principle is relatively easy and in fact if one uses the elegant approach of \cite{Chatterjee06} pretty much a routine. However, as we mentioned when studying the Hopfield model \cite{StojnicHopBnds10}, since we did not create these techniques we chose not to do these routine generalizations. On the other hand, to make sure that the interested reader has a full grasp of a generality of the results presented here, we do emphasize again that pretty much any distribution that can be pushed through the Lindeberg principle would work in place of the Gaussian one that we used.

\begin{singlespace}
\bibliographystyle{plain}
\bibliography{GardSphNegRefs}
\end{singlespace}

\end{document}